\newtheorem{lemma}{Lemma}[section]
\newtheorem{theorem}[lemma]{Theorem}
\newtheorem{corollary}[lemma]{Corollary}
\newtheorem{proposition}[lemma]{Proposition}
\theoremstyle{definition}
\newtheorem{definition}[lemma]{Definition}
\newtheorem{remark}[lemma]{Remark}
\date{}
\title{$L^2$-instability of the Taub-Bolt metric under the Ricci flow}
\author{John Hughes}
\address{Mathematical Institute, University of Oxford, Oxford OX2 6GG, United Kingdom}
\email{john.hughes@maths.ox.ac.uk}
\begin{document}

		\begin{abstract}
		In this paper we prove that there exists a compact perturbation of the Ricci flat Taub-Bolt metric that evolves under the Ricci flow into a finite time singularity modelled on the shrinking solition FIK. Moreover, this perturbation can be made arbitrarily $L^2$-small with respect to the Taub-Bolt metric. The method of proof closely follows the strategy adopted in a paper of Stolarski, where, via a Wa{\.z}ewski box argument, Ricci flows on compact manifolds which encounter finite singularities modelled on a given asymptotically conical shrinking soliton are constructed.
	\end{abstract}
			\maketitle

	\section{Introduction}
	If $(M,g_{0})$ is a Riemannian manifold, the Ricci flow is an equation for the evolution of~$g_{0}$:
	\begin{equation}\label{RFequation}
		\partial_{t}g(t)=-2\mathrm{Ric}(g(t)), \text{ } g(0)=g_{0}.
	\end{equation}
An important class of examples of families of metrics $g(t)$ satisfying (\ref{RFequation}) are Ricci solitons. A Ricci soliton $(M,g,X)$ is a triple consisting of a Riemannian manifold $(M,g)$ and a vector field $X$ on $M$ such that 
\begin{equation}\label{solitoneq}
	\textrm{Ric}+ \mathcal{L}_{X}g=\lambda g,
\end{equation}
for some $\lambda \in \mathbb{R}$. Let $\phi_{t}:M\rightarrow M$ be the family of diffeomorphisms satisfying 
\begin{equation}\label{phi_t}
	\partial_{t} \phi_{t}=\frac{1}{1-2\lambda t} X \circ \phi_{t}, \qquad \phi_{0}=\mathrm{Id}_{M}.
\end{equation}
 Then a solution to the Ricci flow equation (\ref{RFequation}) is given by $g(t)=(1-2\lambda t)\phi_{t}^* g$. If $X=\nabla f$ for some function $f$ then $(M,g,X)=(M,g,f)$ is called a gradient Ricci soliton. 
An example of a Ricci soliton central to this paper is $(\mathbb{CP}^2\backslash\{\text{pt}\}, g_{\mathrm{FIK}}, f_{\mathrm{FIK}})$, a K\"{a}hler shrinking ($\lambda>0$) gradient Ricci soliton called FIK \cite{FIK}.

	The fixed points of equation (\ref{RFequation}) are precisely the Ricci flat metrics. It is therefore natural to ask about the stability of such metrics under the Ricci flow. The purpose of this paper is to prove the following dynamical instability result for the Ricci flat Taub-Bolt metric $(\mathbb{CP}^2\backslash\{\text{pt}\}, g_{\mathrm{Bolt}})$~\cite{Bolt}: 
	\begin{theorem}\label{finitesing}
		Let $\epsilon>0$. There exists a Ricci flow starting at a compact perturbation $G(t_0)$ of $g_{\mathrm{Bolt}}$
		such that $\lVert G(t_0)-g_{\mathrm{Bolt}} \rVert_{L^2_{g_{\mathrm{Bolt}}}}<\epsilon$, and 
		 which encounters a local finite time singularity modelled on the shrinking soliton FIK. 
	\end{theorem}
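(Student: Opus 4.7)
My plan is to follow the Wa{\.z}ewski-box strategy of \cite{Stol1}: produce a one-parameter family of compactly supported perturbations of $g_{\mathrm{Bolt}}$, obtained by gluing in a rescaled copy of the FIK soliton near the bolt, and then use a topological shooting argument to extract a parameter value whose Ricci flow develops a finite-time singularity modelled on FIK. Both $g_{\mathrm{Bolt}}$ and $g_{\mathrm{FIK}}$ live on the total space of $\mathcal{O}(-1)\to \mathbb{CP}^1$, so after identifying the two bolts I would replace $g_{\mathrm{Bolt}}$ in a neighbourhood of the bolt by the rescaled soliton $\lambda^2 g_{\mathrm{FIK}}$, truncate at geodesic radius $\sim R\lambda$, and interpolate back to $g_{\mathrm{Bolt}}$ across an annular transition zone using cutoff functions. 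Since $g_{\mathrm{FIK}}$ is asymptotic to its Euclidean cone at a polynomial rate, the gluing error is concentrated in a region whose $g_{\mathrm{Bolt}}$-volume tends to zero with $\lambda$, and one can arrange $\lVert G(t_0)-g_{\mathrm{Bolt}}\rVert_{L^2_{g_{\mathrm{Bolt}}}}\to 0$ as $\lambda\to 0$, which gives the required $\epsilon$-smallness.

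Next I would enrich the family by an auxiliary scalar parameter $s$ (for example, a time shift reflecting the effective ``time to singularity" of the glued FIK profile) and run the Ricci flow $G(t;s)$ starting at $G(t_0;s)$. In rescaled parabolic coordinates about the bolt the flow should look like a perturbation of the self-similar FIK profile; its linearization carries a finite-dimensional unstable/neutral subspace spanned by the symmetry modes of the shrinker, of which the scaling direction is transverse to a generic one-parameter family. Following \cite{Stol1}, I would define a Wa{\.z}ewski box in weighted parabolic norms around the rescaled FIK profile and verify two exit conditions: at one endpoint of the $s$-interval the rescaled flow exits by shrinking on a scale inconsistent with the FIK model, and at the opposite endpoint it exits by dissipating back into the Ricci-flat exterior without concentrating. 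The Wa{\.z}ewski retract theorem then produces a parameter $s_\ast$ whose flow remains trapped in the box for all rescaled time, and a type-I blow-up analysis along the lines of Enders--M{\"u}ller--Topping identifies the resulting finite-time singularity as modelled on FIK.

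The principal analytic obstacle, inherited from \cite{Stol1}, is verifying the two exit conditions. This demands (i) sharp linear stability of FIK on weighted Sobolev or H{\"o}lder spaces adapted to its asymptotically conical geometry, so that the linearized rescaled Ricci--DeTurck flow contracts away from the soliton's finite-dimensional symmetry subspace; (ii) a careful enumeration of that symmetry subspace, together with a check that the single free parameter $s$ actually transversally cuts the obstruction coming from the unstable scaling mode; and (iii) pseudo-locality-type estimates ensuring that the Ricci-flat Taub-Bolt exterior does not interfere with the concentration of the rescaled flow onto FIK on the relevant time window. With these analytic components in hand, the theorem will follow by combining the Wa{\.z}ewski extraction with the scaling of $\lVert G(t_0)-g_{\mathrm{Bolt}}\rVert_{L^2_{g_{\mathrm{Bolt}}}}$ in $\lambda$, taking $\lambda$ as small as needed to meet the $\epsilon$ bound.
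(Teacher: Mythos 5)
The high-level plan — glue a rescaled copy of FIK into $g_{\mathrm{Bolt}}$ near the bolt and run a Wa{\.z}ewski shooting argument — does match the paper, and your observation that shrinking the gluing region controls the $L^2_{g_{\mathrm{Bolt}}}$-norm of the perturbation is exactly how the paper finishes (the paper sends the parameter $\Gamma_0$ controlling the size of the glued-in region to zero, which plays the role of your $\lambda \to 0$). But there is a genuine gap in the structure of the shooting argument.

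\textbf{The parameter family must be $K$-dimensional, not one-dimensional.} You propose enriching the family by a single scalar parameter $s$ and checking exit conditions at the two endpoints of an interval. That is a one-dimensional intermediate-value argument. However, the weighted Lichnerowicz Laplacian $\Delta_{\overline{g},f}+2\overline{Rm}$ of FIK has \emph{at least two} non-negative eigenvalues: $Ric_{\overline{g}}$ is an eigenmode with eigenvalue $1$ and $\overline{\nabla}^2 f$ is an eigenmode with eigenvalue $0$ (Proposition~\ref{eigenvaluepositive}), and there may be more. Fixing a decay threshold $\lambda_*<0$, there are $K\geq 2$ eigenmodes with eigenvalue $>\lambda_*$, and each produces an unstable direction for the linearized rescaled Ricci--DeTurck equation. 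Consequently the paper's initial data depend on a parameter $\mathbf{p}\in\mathbb{R}^K$, namely $G_{\mathbf{p}}(t_0)=G_{\mathbf{0}}(t_0)+(1-t_0)\phi_{t_0}^*\bigl(\eta_{\gamma_0}\sum_{j=1}^K p_jh_j\bigr)$ where $h_1,\dots,h_K$ are the leading eigenmodes, and the topological step is a degree-theoretic contradiction in $\mathbb{R}^K$: a continuous map $\mathcal{F}:\overline{B}_{\overline{p}e^{\lambda_*\tau_0}}\to\mathbb{R}^K\setminus\{0\}$ whose restriction to a boundary annulus is homotopic to the identity cannot exist (Lemmas~\ref{Fhomotoid} and~\ref{Fcont}). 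Your scalar parameter $s$ would only detect one unstable direction and cannot simultaneously tune away all $K$; the claim that the symmetry subspace is ``transverse to a generic one-parameter family'' does not hold when that subspace has dimension $\geq 2$ and is precisely what blocks convergence to the soliton profile.

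\textbf{Two secondary remarks.} First, the paper does not rely on a general Type-I/Enders--M\"uller--Topping classification to identify the model; it gauge-fixes by a harmonic map heat flow (the DeTurck trick adapted as in \cite{Stol1}), tracks the tensor $h_{\mathbf{p}}(t)=\tfrac{1}{1-t}(\Phi_t^{-1})^*\acute{G}_{\mathbf{p}}(t)-\overline{g}$ directly, and shows $h_{\mathbf{p}^*}(t)\to 0$ in $C^\infty_{loc}(M)$, which gives the rescaled Cheeger--Gromov convergence to $(1-t)\phi_t^*\overline{g}$ explicitly. Your proposal omits this gauge-fixing entirely, yet without it the evolution equation for the perturbation is not strictly parabolic and the linear theory you invoke does not apply. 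Second, the glued-in data should be the self-similarly evolving profile $(1-t_0)\phi_{t_0}^*\overline{g}$, which converges to the cone $C_{\mathrm{FIK}}$ as $t_0\nearrow 1$ in the interpolation annulus; using the static rescaling $\lambda^2 g_{\mathrm{FIK}}$ is equivalent up to diffeomorphism but obscures why the gluing errors decay as $t_0\nearrow 1$.
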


The dynamical instability of $g_{\mathrm{Bolt}}$ is not unexpected. Indeed, in \cite{Cla1}, $g_{\mathrm{Bolt}}$ is shown to be linearly unstable, meaning the Lichnerowicz Laplacian $\Delta_{g_{\mathrm{Bolt}}}^L$ has a positive eigenvalue with associated smooth $L^2_{g_{\mathrm{Bolt}}}$ eigentensor $h$, which we call an unstable direction. Furthermore, numerical simulations presented in \cite{Cla1}  suggest that if the Ricci flow is started at $g_{\mathrm{Bolt}}+h$, then a finite time singularity forms, caused precisely by the bolt shrinking to zero size in finite time. Similar numerical findings for the Ricci flow starting at more general compact perturbations of $g_{\mathrm{Bolt}}$ are presented in \cite{Oxford1}. It is widely believed that the singularity is modelled on the asymptotically conical shrinking soliton FIK, which has the same underlying manifold as $g_{\mathrm{Bolt}}$. 

Actually proving the numerical findings of \cite{Cla1} and \cite{Oxford1} analytically is challenging. Even proving that the Ricci flow starting at some compact or $L^2_{g_{\mathrm{Bolt}}}$-small perturbation of $g_{\mathrm{Bolt}}$ leads to a finite time singularity is not straightforward. Indeed, in many other scenarios, see \cite{App1}, \cite{Dan2} or \cite{Fra2} for example, showing the existence of a finite time singularity follows easily from a simple use of the maximum principle, with the hard work only  occurring in a more detailed analysis of the singularity formation. Being Ricci flat, $g_{\mathrm{Bolt}}$ is fixed under the Ricci flow. Consequently a global maximum principle cannot be applied to a Ricci flow starting at a compact or $L^2_{g_{\mathrm{Bolt}}}$-small perturbation of $g_{\mathrm{Bolt}}$. This means proving Theorem \ref{finitesing} will require a more subtle argument. As a result, this paper will proceed via a non-constructive approach known as a Wa{\.z}ewski box argument \cite{Wazewski} to prove Theorem \ref{finitesing}. We will closely follow the application of the Wa{\.z}ewski box argument by Stolarksi \cite{Stol1}, where it is shown that given an asymptotically conical shrinking soliton, there is a Ricci flow on a compact manifold which forms a local finite time singularity modelled on it. The family of metrics used in the Wa{\.z}ewski box argument \cite{Stol1} is a  family of perturbations of the given soliton glued onto the two ends of a finite cylinder. Inspired by this, we will use a family of perturbations of $g_{\mathrm{FIK}}$ glued into $g_{\mathrm{Bolt}}$. 

We emphasize that despite $g_{\mathrm{FIK}}$ and  $g_{\mathrm{Bolt}}$ both possessing the same cohomogeneity one $U(2)$-symmetry on $\mathbb{CP}^2\backslash\{\text{pt}\}$, the Ricci flow $G(t)$ provided by Theorem \ref{finitesing} will not necessarily be a flow through cohomogeneity one $U(2)$-symmetric metrics. 

A outline of the paper is as follows. Section \ref{TBandFIK} will introduce the Ricci flat Taub-Bolt metric $g_{\mathrm{Bolt}}$ and the shrinking soltion FIK. Section \ref{setup} will set up the framework of the Wa{\.z}ewski box argument used to prove Theorem \ref{finitesing}. Section \ref{final} will prove Theorem \ref{finitesing}, which will be a proof of distinct parts. The first part is very closely follows Stolarkski Wa{\.z}ewski box argument to show the existence of a Ricci flow with initial condition that is a compact perturbation of $g_{\mathrm{Bolt}}$ which encounters a finite time singularity modelled on FIK. The second part is showing that we can actually choose this initial condition to be as $L^2_{\mathrm{Bolt}}$-close to $g_{\mathrm{Bolt}}$ as possible. Appendix \ref{construct} will describe precisely the family of initial metrics used in the application of the Wa{\.z}ewski box argument. Appendix \ref{pseudoapp} will prove a pseudolocality result needed to prove that the finite time singularity of Theorem \ref{finitesing} is local. \\

$\mathbf{Acknowledgements.}$ I would like to thank my
supervisor Jason Lotay for his support and advice. Research supported by a scholarship from EPSRC (grant number EP/W524311/1).

\section{The Taub-Bolt and FIK metrics}\label{TBandFIK}
In this section we will introduce the Taub-Bolt and FIK family of metrics.

Let $\sigma_{1}, \sigma_{2}, \sigma_{3}$ be a Milnor frame on $S^3$. Set $M=\mathbb{CP}^2\backslash\{\text{pt}\}$. Then $M$ is diffeomorphic to the complex line bundle $\mathcal{O}(-1)$ over $\mathbb{CP}^1$. Furthermore,  cohomogeneity one $U(2)$-symmetric metrics on $M$ can be written in the form 
\begin{equation}\label{cohomform}
	g=ds^2+ b^2(s)\left(\sigma_{1}^2+\sigma_{2}^2\right)+c^2(s)\sigma_{3}^2, \qquad s>0,
\end{equation}
where the metric closes up at $s=0$ to produce a well defined metric on $M$ provided $b(0)>0, c(0)=0$, $\partial_s b(0)=~0$ and $\partial_s c(0)=1$. The set $\{s=0\}\subset M$ is a copy of $\mathbb{CP}^1=S^2$ and is called the bolt of $g$.

\subsection{Taub-Bolt}

Strictly speaking the Taub-Bolt metric is a family of metrics $\alpha g_{\mathrm{Bolt,n}}$ parametrised by $\alpha, n\in \mathbb{R}_{>0}$. The metric $g_{\mathrm{Bolt,n}}$ is a Ricci flat metric on $M$ with a $U(2)$-symmetry acting by cohomogeneity one, and so can be written in the form (\ref{cohomform}) with $b\eqqcolon b_{\text{Bolt,n}}, c\eqqcolon c_{\text{Bolt,n}}$. 
Reparametrising the radial direction $\partial_s$, the metric $g_{\mathrm{Bolt,n}}$ can be written explicitly as 
\begin{equation}\label{TB}
	g_{\mathrm{Bolt},n}=\frac{(r+n)(r+3n)}{r\left(r+\frac{3n}{2}\right)}dr^{2}+ 4(r+n)(r+3n)\left(\sigma_{1}^{2}+\sigma_{2}^{2}\right)+ 16n^{2}\frac{r\left(r+\frac{3n}{2}\right)}{(r+n)(r+3n)}\sigma_{3}^2.
\end{equation}
The relation between $r>0$ and $s$ is given by $s= \int_{0}^{r} \sqrt{\frac{(\bar{r}+n)(\bar{r}+3n)}{\bar{r}\left(\bar{r}+\frac{3n}{2}\right)}} d\bar{r}$.
 The coefficients satisfy $\partial_s b_{\mathrm{Bolt,n}}(s) \rightarrow 2$ and $c_{\mathrm{Bolt,n}}(s) \rightarrow 4n$ as $s\rightarrow \infty$. Furthermore, $\lvert \text{Rm}_{g_{\mathrm{Bolt,n}}} \rvert_{g_{\mathrm{Bolt,n}}}(s)=O(s^{-3})$ as $s\rightarrow \infty$. From (\ref{TB}) we can see that the asymptotic geometry of $g_{\mathrm{Bolt,n}}$ is a cone over a Berger sphere collapsed in the $\sigma_{3}$ direction, and the volume growth is cubic in $s$.

 Up to rescaling and pullback by diffeomorphisms, the Taub-Bolt metric family has only one member. Indeed, if  we rescale the fibres of this $M=\mathcal{O}(-1)$ according to the diffeomorphism $\phi(r)=\frac{n}{m}r$ of $\mathbb{CP}^2\backslash\{\text{pt}\}$ then $\phi^{*}g_{\mathrm{Bolt},n}=\frac{n^2}{m^2}g_{\mathrm{Bolt},m}$. Since the Ricci flow is invariant under rescaling and pullback by diffeomorphisms, we fix $n$ and  denote $g_{\mathrm{Bolt,n}}$ by $g_{\mathrm{Bolt}}$.

\subsection{FIK}

We introduce FIK via the following theorem.
\begin{theorem}[\cite{FIK}]\label{FIKthm}
	Let $C_{\mathrm{FIK}}\coloneqq (C(S^3), g_{C_{\mathrm{FIK}}})$ be the cone over the sphere $(S^3, g_{S^3_\mathrm{FIK}} \coloneqq \frac{1}{\sqrt{2}}(\sigma_{1}^2+\sigma_{2}^2)+\frac{1}{2}\sigma_{3}^2)$. Then there exists a $U(2)$-symmetric shrinking gradient K\"{a}hler Ricci soliton $(M, g_{\mathrm{FIK}}, f_{\mathrm{FIK}})$, with $\lambda=\frac{1}{2}$ in equation (\ref{solitoneq}), on $M=\mathbb{CP}^2\backslash\{\mathrm{pt}\}$ that is asymptotically conical with asymptotic cone $C_{\mathrm{FIK}}$. Let $\phi_{t}$ be the family of diffeomorphisms satisfying (\ref{phi_t}) with $X=\nabla f_{\mathrm{FIK}}$, and let $g_{\mathrm{FIK}}(t)=(1-t)\phi^*_t g_{\mathrm{FIK}}(0)$ denote the Ricci flow with initial condition $g_{\mathrm{FIK}}$. There exists a diffeomorphism $\Psi : C_{\mathrm{FIK}}  \to (\mathbb{CP}^2\backslash \{\mathrm{pt}\})\setminus \{s=0\}$ which maps $\{r=\mathrm{constant}\}\subset C_{\mathrm{FIK}}$ to $\{f_{\mathrm{FIK}}=\mathrm{constant}\} \subset(\mathbb{CP}^2\backslash \{\mathrm{pt}\})\setminus \{s=0\}$, and preserves the order. Also, the  smooth family of diffeomorphisms $\Psi_t =  \phi_t \circ \Psi$ yields 
	$$ ( 1 - t) \Psi_t^*  g_{\mathrm{FIK}} \xrightarrow[t \nearrow 1]{ C^\infty_{loc} ( C(S^3), g_{C_{\mathrm{FIK}}} ) }		g_{C_{\mathrm{FIK}}}.$$
	Finally, when $g_{\mathrm{FIK}}$ is written in the form 
	\begin{equation*}
		g_{\mathrm{FIK}}=ds^2+ b_{\text{FIK}}^2(s)\left(\sigma_{1}^2+\sigma_{2}^2\right)+c_{\text{FIK}}^2(s)\sigma_{3}^2, \qquad s>0,
	\end{equation*}
	we have the following:
	\begin{enumerate}[i)]
		\item $\partial_s f_{\mathrm{FIK}}(0)=0$ and $\partial_s {f_{\mathrm{FIK}}}>0$ for $s>0$;
		\item $b_{\mathrm{FIK}}(s) \rightarrow \frac{s}{\sqrt[4]{2}}$, $c_{\mathrm{FIK}}(s) \rightarrow \frac{s}{\sqrt{2}}$ as $s\rightarrow \infty$
		\end{enumerate}
\end{theorem}

\begin{remark}\label{infinity} 
	The diffeomorphisms $\phi_t$ are generated by $\frac{\nabla f_{\mathrm{FIK}}}{1-t}$, and $\phi_0=\mathrm{Id}_M$. Thus, $\phi_t$ preserves the level sets of $f_{\mathrm{FIK}}$. 
	Since $g_{\mathrm{FIK}}(t)=(1-t)\phi^*_t g_{\mathrm{FIK}}(0)$ and the curvature only blows up at the bolt of $g_{\mathrm{FIK}}$ \cite{FIK}, the diffeomorphims $\phi_{t}$ have to push all points with $s>0$ to infinity in finite time, while $\{s=0\}\subset M$ is fixed. This implies that for any $C\geq \tilde{C}>f_{\mathrm{FIK}}(\{s=0\})$ there is a $t\in[0,1)$ such that $\phi_{t}(\{f_{\mathrm{FIK}}=\tilde{C}\})= \{f_{\mathrm{FIK}}=C\}$.
\end{remark}
	
	To make the referencing of \cite{Stol1} as easy as possible we will use the same notation, and so for the rest of this paper we will fix the notation $(M, \overline{g}, f)\coloneqq ( \mathbb{CP}^2\backslash\{\text{pt}\}, g_{\mathrm{FIK}}, f_{\mathrm{FIK}})$ with $\phi_{t}$ the assoicated family of diffeomorphisms. Furthermore, the Levi-Civita connection associated with $\overline{g}=g_{\mathrm{FIK}}$ will be denoted by $\overline{\nabla}$. For the rest of this paper, $f_{\mathrm{Bolt}} \coloneqq f(\{s=0\})$ (i.e the value of $f$ at the bolt of $(M, g_{\mathrm{FIK}})$). The soliton potential $f$ is unique up a constant. To use \cite[Lemma 2.33]{Stol1} without alteration, we follow Stolarksi and fix the constant by demanding that $\mathrm{R}+\lvert \overline{\nabla} f \rvert_{\overline{g}}^2=f$ on $M$, where $R$ is the scalar curvature of $\overline{g}$.

	\section{The setup}\label{setup}
	Following \cite{Stol1} closely, in this section we will set up the Wa{\.z}ewski box argument.
	
	Fix a smooth $L_{f}^2$-orthonormal basis $\{h_{j}\}_{j=1}^{\infty}$ of eigenmodes of the weighted Lichnerowicz Laplacian with respect to $g_{\mathrm{FIK}}$, $\Delta_{f}+2\overline{Rm}$, with corresponding eigenvalues $\{\lambda_{j}\}_{j=1}^{\infty}$. Choose $\lambda_{*}\in (-\infty,0)\setminus \{\lambda_{j}\}_{j=1}^{\infty}$ and $K\in \mathbb{N}$ such that $\lambda_{K}>\lambda_{*}>\lambda_{K+1}$ \cite[\S 2]{Stol1}.
	
	The rough idea of the argument used to prove Theorem \ref{finitesing} is the following.
	We will construct a metric $G_{\mathbf{p}}(t_0, \Gamma_{0})$ which interpolates between the metric
	\begin{equation*}
		g= (1-t_{0})\phi_{t_{0}}^{*}\biggl[\overline{g}+\eta_{\gamma_{0}}\sum_{j=1}^{K} p_{j}h_{j} \biggr],
	\end{equation*} 
	
	 the cone $g_{C_{\mathrm{FIK}}}$, and $g_{\mathrm{Bolt}}$, 
	where $\mathbf{p}=(p_{1}, ..., p_{K}) \in \mathbb{R}^{K}$,  $t_{0}<1$, $\eta_{\gamma_{0}}$ is a smooth bump function, and $\Gamma_{0}$ is a parameter which controls the location on which the metrics $G_{\mathbf{p}}(t_0, \Gamma_{0})$ and $g_{\mathrm{Bolt}}$ differ (smaller $\Gamma_{0}$ implies they differ over a smaller region). Let $G_{\mathbf{p}}(t, t_0, \Gamma_{0})$ denote the maximal solution to the Ricci flow on $M$ with initial condition $G_{\mathbf{p}}(t_0, \Gamma_{0})$. We will show that for each $\Gamma_{0}\leq C$, where $C$ is some constant whose value will be determined in the course of the proof of Theorem \ref{finitesing}, there is some $\mathbf{p}$ and $t_{0}$ such that the family of metrics $G_{\mathbf{p}}(t, t_0, \Gamma_{0})$ gives us a Ricci flow described in Theorem \ref{finitesing}. By choosing $\Gamma_{0}$ to be arbitrarily small, we will be to make the region on which $G_{\mathbf{p}}(t_0, \Gamma_{0})$ and $g_{\mathrm{Bolt}}$ differ be arbitrarily small. This will allows us to show that $\lVert G_{\mathbf{p}}(t_0, \Gamma_{0})-g_{\mathrm{Bolt}} \rVert_{L^2_{g_{\mathrm{Bolt}}}}$ can be made arbitrarily small.
	
	\subsection{$G_{\mathbf{p}}(t_{0})$}\label{first}

	In Appendix \ref{construct}, a family of metrics on $M$ with the following properties will be constructed.
	\begin{definition} \label{G_0} There exists a $\tilde{\Gamma}_0>0$ such that for every $0<\Gamma_{0}\leq \tilde{\Gamma}_0$  and every $0 < 1- t_0 \ll 1$ sufficiently small depending on $\Gamma_{0}$, there exists a
		Riemannian metric $G_{\mathbf 0}( t_0) = G_{\mathbf 0}( t_0, \Gamma_{0})$ 
		on $M$ such that:
		\begin{enumerate}[i)]
			\item 
			(FIK near the bolt of $(M,\overline{g})$) 
			$$G_{\mathbf 0} (t_0) = ( 1 -t_0) \phi_{t_0}^* g
			\qquad \text{on } \{ x \in M : f(x) \le f_{\mathrm{Bolt}}+\Gamma_{0} \},$$

			\item 
			(Independence of $t_0$ far from $\{f=f_{\mathrm{Bolt}}\}$)
			Throughout the subset $\{ x \in M :  f(x) \ge f_{\mathrm{Bolt}}+ 16\Gamma_{0} \}$,
			$$G_{\mathbf 0 } ( t_0) = G_{\mathbf 0 } ( t_0 ' )	\qquad 
			\text{for all } 0 \le t_0, t_0 ' < 1,$$

			\item 
			(Convergence to cone in an intermediate region)
			The assignment $t_0 \mapsto G_{\mathbf 0} (t_0)$ is smooth in $t_0$. Furthermore, restricting the embedding from Theorem \ref{FIKthm} (and denoting its inverse also by $\Psi$), 
			 
			$$\Psi : \{ x \in M : f_{\mathrm{Bolt}}< f(x) < f_{\mathrm{Bolt}}+32\Gamma_0 \} \rightarrow C_{FIK},$$
			we have that $G_{\mathbf{0}}(t_0)$ smoothly converges to $\Psi^* g_{C_{\mathrm{FIK}}}$ on this region as $t_0 \nearrow 1$.

			\item 
			(Curvature estimates far from $\{f=f_{\mathrm{Bolt}}\}$)
			For all $m \in \mathbb{N}$, there exists $C=C(m,\Gamma_{0})$
			such that,
			if $0 < 1 - t_0 \ll 1$ is sufficiently small,
			then 
			$$| \overline{\nabla}^m \mathrm{Rm}[ G_{\mathbf 0 }(t_0) ] | \le  C$$
			on the set  $\{x \in M : f(x) \ge f_{\mathrm{Bolt}}+\Gamma_0 /2 \} $.
			
			\item (Glueing into $g_{\mathrm{Bolt}}$) For some $\tilde{C}=\tilde{C}(\Gamma_{0})\geq 32\Gamma_{0}>0$ and $0 < 1 - t_0 \ll 1$ sufficiently small depending on $\Gamma_{0}$, we have
			$G_{\mathbf{0}}(t_0)=g_{\mathrm{Bolt}}$ on $\{f>f_{\mathrm{Bolt}}+\tilde{C}\}$. Furthermore, $\tilde{C}(\Gamma_{0}) \rightarrow 0$ as $\Gamma_{0} \rightarrow 0$.
			
			\item There is a $B>0$ independent of $\Gamma_{0}\leq \tilde{\Gamma}_0$ such that $G_{\mathbf{0}}(t_0) \leq B g_{\mathrm{Bolt}}$ on $M$.
		\end{enumerate}
	\end{definition}
Since we are following the set up of \cite{Stol1}, we will, apart from the exception on one parameter, use the same notation as Stolarski. That one exception is that the value of the parameter $\Gamma_{0}$ will change from $\Gamma_{0}>f_{\mathrm{min}}$ as is the case in \cite{Stol1} to $\Gamma_{0}>0$. We do this as, unlike Stolarski, we are interested, for reasons that will be explained in the discussion of property v) below, in the case where the region on which $G_{\mathbf{0}}(t_0, \Gamma_{0})$ differs from $g_{\mathrm{Bolt}}$ to be small, which in our notation corresponds to small $\Gamma_{0}$ (we prefer to think of $\Gamma_{0}$ as small as opposed to $\Gamma_{0}$ near to $f_{\mathrm{min}}=f_{\mathrm{Bolt}}<\Gamma_{0}$). With this notation change in mind we discuss the properties i)-vi).
Stolarski only uses the properties of the metric $G_{\mathbf{0}}$ listed in \cite[Definition 4.2]{Stol1} to prove \cite[Theorem 1.1]{Stol1}. 
Similarly, the proof of Theorem \ref{finitesing} will not rely on any other properties of $G_{\mathbf{0}}$ other than those listed in Definition \ref{G_0}.  
Some of the properties listed in Definition \ref{G_0} are akin to the properties listed in \cite[Definition 4.2]{Stol1}. 
 Properties i), ii) and iii) are exactly the same. Property iv) only differs in that the constant $C$ in \cite{Stol1} is not dependent on $\Gamma_{0}$, but the right hand side of the inequality is instead $\frac{C(m)}{\Gamma_{0}^{1+\frac{m}{2}}}$. This difference does not have any meaningful impact on our use of the results proven in \cite{Stol1} as, after an initial set up, the author fixes $\Gamma_{0}$. Property v) is written as we are glueing metrics into $g_{\mathrm{Bolt}}$ and want the region on which we do this to be as small as possible, which is the main difference between the work in \cite{Stol1} and the work in this paper. Making the region on the which the glueing procedure happens arbitrarily small will allow us to, by choosing $\Gamma_{0}$ to arbitrarily small, make the initial metric of the Ricci flow described in Theorem \ref{finitesing} arbitrarily $L^2_{g_{\mathrm{Bolt}}}$ close to $g_{\mathrm{Bolt}}$. Stolarski works with an arbitrary asymptotically conical shrinking soliton and so does not a priori know the shrinker converges to its asymptotic cone globally. However, as mentioned above, this is the case for FIK. Thus, we can take the conical metric and the analysis of the intermediate region in part iii) of Definition \ref{G_0} conducted in \cite{Stol1} as close to the set $\{f=f_{\mathrm{Bolt}}\}$ as possible. As a result, we can choose to have the region on which $G_{\mathbf{0}}(t_0)$ differs from $g_{\mathrm{Bolt}}$ be arbitrarily small (i.e. allow $\Gamma_{0}$ to be arbitrarily small). In order to do this, unlike in \cite{Stol1}, the parameter $t_0$ needs to depend on the value of $\Gamma_{0}$, so that by taking $t_0$ close enough to 1, we can ensure property iii) in Definition \ref{G_0} is satisfied. The purpose of property vi) is to control the geometry of $G_{\mathbf{0}}(t_0)$ and will also be useful to show that the initial metric of the Ricci flow described in Theorem \ref{finitesing} arbitrarily $L^2_{g_{\mathrm{Bolt}}}$ close to $g_{\mathrm{Bolt}}$. 
 
 As a consequence of this main difference, we will need to adapt, in minor ways, many of the results in \cite{Stol1}. Most, but not all, of the statements and their proofs of these adapted results will not be written in this paper, but will be used. A more precise description of the how to adapt the statements and proofs will be presented in Subsection \ref{comments}, after we have introduced some more notation.

	Using $G_{\mathbf{0}}(t_0)$ we define the family of initial metrics $G_{\mathbf{p}}(t_0)$. This paper will show that for $0<1-t_0 \ll 1$ sufficiently small and some $\mathbf{p}\in \mathbb{R}^K$, Ricci flow starting at $G_{\mathbf{p}}(t_0)$ gives us a Ricci flow as in Theorem \ref{finitesing}. 
		
	Following Stolarski, let $0<\overline{p}<1$. For all $\mathbf p = ( p_1, \dots , p_{K} ) \in \mathbb{R}^K$ with $| \mathbf p | \le \overline{p} (1 - t_0)^{| \lambda_*|}$, define a symmetric 2-tensor $G_{\mathbf p }(t_0) = G_{\mathbf p } ( \Gamma_0 , \gamma_0, t_0)$ on $M$ by
	\begin{equation} \label{G_p}
		G_{\mathbf p} (t_0 ) \coloneqq G_{\mathbf 0 }(t_0 ) 	
		+ ( 1 - t_0) \phi_{t_0}^* \left( \eta_{\gamma_0}     \sum_{j = 1}^{K } p_j  h_{j}   \right),
	\end{equation}
where, for $\gamma_{0}>0$, $\eta_{\gamma_{0}}$ is the bump function constructed in \cite[\S 4.2]{Stol1} which satisfies $\eta_{\gamma_0} (x) = 1$ for all $x \in M$ such that $f(x) \le \frac{\gamma_0}{2 ( 1 - t_0)}$ and $\text{supp} (\eta_{\gamma_0}) \subset \left \{ x \in M : f(x) < \frac{\gamma_0}{1 - t_0} \right \}$.

	The next couple of results will allow us to conclude that, provided $0 < 1-t_0 \ll 1$ is sufficiently small and $0 < \gamma_0=\gamma_{0}(t_0) \le 1$ is sufficiently small, $G_{\mathbf{p}}(t_0)$ is a well-defined metric on $M$.
	
	As an illustration of how the proofs of many of the results will need to be adapted to allow for small $\Gamma_{0}$, we provide the full details of the proof of the following lemma, which follows the proof of \cite[Lemma 4.5]{Stol1}
	\begin{lemma}\label{lemmaforwelldefined}
		Let $\Gamma>0$. If
		$0 < 1-t_0 \ll 1$ is sufficiently small and $0 < \gamma_0=\gamma_{0}(t_0) \le 1$ is sufficiently small,
		then
		$$\mathrm{supp} (\phi_{t_0}^* \eta_{\gamma_0}) \subset \{x \in M :  f(x) < f_{\mathrm{Bolt}}+\Gamma \}.$$
	\end{lemma}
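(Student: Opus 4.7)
The plan is to translate the support condition into a pointwise condition on $f\circ\phi_{t_0}$, then invoke the monotonicity of $\phi_t^*f$ given by Lemma \ref{C} to pull that condition back to a condition on $f$ at the original point. The ``sufficiently small'' restriction on $\gamma_0$ will then come from a purely scalar inequality involving $1-t_0$, $f_{\mathrm{Bolt}}$, and $\Gamma$.

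Concretely, fix $x\in\operatorname{supp}(\phi_{t_0}^*\eta_{\gamma_0})$. By definition of pullback, $\phi_{t_0}(x)\in\operatorname{supp}(\eta_{\gamma_0})$, so property (ii) of $\eta_{\gamma_0}$ from Section \ref{setup} yields $F(t_0)\le \gamma_0/(1-t_0)$, where $F(t)\coloneqq f(\phi_t(x))$. The lower bound in Lemma \ref{C} gives $(1-t)F'(t)\ge 0$ on $[0,t_0]$, so $F$ is non-decreasing. In particular,
$$f(x)=F(0)\le F(t_0)\le \frac{\gamma_0}{1-t_0}.$$

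To conclude, I would impose the additional smallness condition $\gamma_0=\gamma_0(t_0)<(1-t_0)(f_{\mathrm{Bolt}}+\Gamma)$. For $0<1-t_0\ll 1$ sufficiently small (depending on $\Gamma$ and $f_{\mathrm{Bolt}}$), one has $(1-t_0)(f_{\mathrm{Bolt}}+\Gamma)\le 1$, so this is compatible with the requirement $\gamma_0\le 1$; note that it forces $\gamma_0\to 0$ as $t_0\nearrow 1$, which is consistent with the statement that $\gamma_0$ depends on $t_0$. Combining with the displayed estimate gives $f(x)<f_{\mathrm{Bolt}}+\Gamma$, as required.

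There is no real obstacle here: only the trivial monotonicity consequence of Lemma \ref{C} is used, reflecting the geometric fact that $\phi_t$ flows along $\overline{\nabla}f/(1-t)$, hence in the direction of increasing $f$. The sharper quantitative estimate $F^2(0)\le C+F^2(t_0)(1-t_0)^2$ obtainable by integrating the full ODE in Lemma \ref{C} would produce a $t_0$-independent bound $f(x)\le\sqrt{C+\gamma_0^2}$; but because this cannot be driven below $\sqrt{C}$ by shrinking $\gamma_0$ alone, it is not sharp enough to reach arbitrarily small $\Gamma$, and the monotonicity argument together with a $t_0$-dependent choice of $\gamma_0$ is the correct route.
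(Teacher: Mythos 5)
Your argument is logically sound, and under the condition you impose ($\gamma_0<(1-t_0)(f_{\mathrm{Bolt}}+\Gamma)$, i.e.\ $\gamma_0\lesssim e^{-\tau_0}$) the containment does follow from the elementary monotonicity of $\phi_\tau^*f$. However, there is a genuine gap: the smallness threshold you obtain on $\gamma_0$ degenerates as $t_0\nearrow 1$, and this is not the statement the paper needs. Elsewhere in the paper the parameter hierarchy is the opposite of yours: $\gamma_0$ is fixed first (depending on $\lambda_*$, $\Gamma_0$, $\epsilon_i$, \dots) and only \emph{afterwards} is $\tau_0$ sent to infinity depending on $\gamma_0$ --- see for instance the assumptions in Subsubsection \ref{assumption}, Lemma \ref{zeothorderinitialest} where $\tau_0=\tau_0(\gamma_0,\Gamma_0)$ is taken large after $\gamma_0$, and Lemma \ref{t_1^*}. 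Under your condition, $\gamma_0 e^{\tau_0}\le f_{\mathrm{Bolt}}+\Gamma$ stays bounded, so $\mathrm{supp}(\eta_{\gamma_0})\subset\{f<\gamma_0 e^{\tau_0}\}$ remains in a fixed compact set as $\tau_0\to\infty$; this is incompatible with the spectral estimates of Section \ref{penultimate}, which require the cutoff region to exhaust $M$. So what is actually needed, and what the paper proves, is a threshold on $\gamma_0$ depending only on $\Gamma$ (via the constant $C$ from Lemma \ref{C}), uniform in $t_0$.

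You considered this $t_0$-independent route and dismissed it on the grounds that integrating the full ODE from $\tau=0$ only yields $f(x)\le\sqrt{C+\gamma_0^2}$, which cannot be pushed below $\sqrt{C}$. That observation is correct but misses the paper's key device: do not start the integration at $\tau=0$. Instead, use Remark \ref{infinity} to pick a fixed $\tilde\tau_0=\tilde\tau_0(\Gamma)$ large enough that $\phi_{\tilde\tau_0}$ carries the level set $\{f=f_{\mathrm{Bolt}}+\Gamma\}$ strictly past $\{f=\sqrt{C+1}\}$. Then integrate the lower bound $\partial_\tau(\phi^*_\tau f)\ge\phi^*_\tau f-C/\phi^*_\tau f$ only from $\tilde\tau_0$ to $\tau_0$, obtaining
$f\big(\phi_{\tilde\tau_0}(x)\big)^2\le C+e^{2\tilde\tau_0}\gamma_0^2-Ce^{2(\tilde\tau_0-\tau_0)}\le C+1$
for all $x\in\mathrm{supp}(\phi_{t_0}^*\eta_{\gamma_0})$, provided $\gamma_0< e^{-\tilde\tau_0}$ and $\tau_0\ge\tilde\tau_0$. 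Since $\phi_{\tilde\tau_0}$ preserves the ordering of $f$-levels, the contrapositive of the defining property of $\tilde\tau_0$ gives $f(x)<f_{\mathrm{Bolt}}+\Gamma$. In effect the $\sqrt{C}$ floor you worried about is a floor at the reference time, and by choosing the reference time $\tilde\tau_0$ late enough the flow $\phi_{\tilde\tau_0}^{-1}$ pulls that floor back inside $\{f<f_{\mathrm{Bolt}}+\Gamma\}$. This is the missing idea, and it is what produces the required $t_0$-uniform threshold $\gamma_0<e^{-2\tilde\tau_0(\Gamma)}$.
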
	
\begin{proof}
	Rewriting the conclusion of \cite[Lemma 2.33]{Stol1} using the variable $\tau(t)= -\text{ln}(1-t)$,  it follows that there exists $C>0$ such that 
	$$\max\left\{0,\phi^*_\tau f-\frac{C}{\phi^*_\tau f}\right\}\leq \partial_\tau(\phi^*_\tau f)\leq \phi_{\tau}^*f,$$
	for all $\tau \in \mathbb{R}$. By Remark \ref{infinity}, there exists $\tilde{\tau}_0>0$ such that if $f(x)=f_{\mathrm{Bolt}}+\Gamma$ then $(\phi_{\tilde{\tau}_0}^*f)(x)>\sqrt{C}$. Integrating from $0$ to $\tau$  gives 
	$$f(x)\leq \phi^*_{\tau}f(x), \qquad \forall (x,\tau)\in M\times [0,\infty),$$
	$$\sqrt{C+(f(\phi_{\tilde{\tau}_0}(x))^2-C)e^{2(\tau-\tilde{\tau}_0)}}\leq \phi_{\tau}^*f(x)\qquad \forall (x,\tau)\in \{f>\sqrt{C}\} \times \{(\tilde{\tau}_0, \infty)\}.$$
	Let $\tau_{0}=-\text{ln}(1-t_0)\geq \tilde{\tau}_0$. Notice that 
	$$\text{supp} (\phi_{\tau_0}^*\eta_{\gamma_0})= \{[\phi_{\tau_0}^*f](x)\leq \gamma_{0}e^{\tau_{0}}\}.$$	Let $x\in M$ be such that $\phi_{t_0}^*f(x)\leq \gamma_{0}e^{\tau_{0}}$. If $f(\phi_{\tilde{\tau}_0}(x))<\sqrt{C}$ then $f(x)<\Gamma$. If $f(\phi_{\tilde{\tau}_0}(x))\geq \sqrt{C}$ then 
	$$\sqrt{C+(f(\phi_{\tilde{\tau}_0}(x))^2-C)e^{2(\tau_0-\tilde{\tau_{0}})}}\leq \phi^*_{\tau_{0}}f(x)\leq \gamma_{0}e^{\tau_{0}}.$$
	Therefore,
	$$f(\phi_{\tilde{\tau}_0}(x))\leq \sqrt{C+e^{2\tilde{\tau_{0}}}\gamma_{0}^2-Ce^{2(\tilde{\tau}_0-\tau_{0})}}\leq \sqrt{C+1},$$
	whenever $\gamma_{0}<e^{-2\tilde{\tau_{0}}}$. By the choose of $\tilde{\tau}_0$, it follows that  $f(x)<\Gamma$ must be the case. 	
\end{proof}

	\begin{proposition} \label{G_p-G-0}
		Let $0<\Gamma_{0}\leq \tilde{\Gamma}_0$. If
		$0 \leq 1-t_0 \ll 1$ is sufficiently small and $0 < \gamma_0=\gamma_{0}(t_0, \lambda_{*}) \le 1$ is sufficiently small,	
		then there exists $C=C(\lambda_{*})>0$ such that
		$$| G_{\mathbf p} (t_0) - G_{\mathbf 0} (t_0) |_{G_{\mathbf 0} (t_0)} \leq C\overline{p}  \gamma_0^{|\lambda_*|}$$
		for all $| \mathbf p | \le \overline{p} ( 1 - t_0)^{|\lambda_*|} $.
	\end{proposition}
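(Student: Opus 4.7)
The plan is to reduce the bound to a pointwise growth estimate for the eigenmodes $h_j$ on the support of $\eta_{\gamma_0}$. Directly from the definition of $G_{\mathbf p}(t_0)$ in \eqref{G_p},
\[
G_{\mathbf p}(t_0) - G_{\mathbf 0}(t_0) = (1-t_0)\,\phi_{t_0}^*\!\Bigl(\eta_{\gamma_0}\sum_{j=1}^{K} p_j h_j\Bigr).
\]
Applying Lemma \ref{lemmaforwelldefined} with $\Gamma = \Gamma_0$ gives, for $1-t_0$ and $\gamma_0 \leq 1$ small enough, that $\mathrm{supp}(\phi_{t_0}^*\eta_{\gamma_0}) \subset \{f \leq f_{\mathrm{Bolt}} + \Gamma_0\}$. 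On this set, property (i) of Definition \ref{G_0} gives $G_{\mathbf 0}(t_0) = (1-t_0)\phi_{t_0}^* \overline g$, so both tensors share the same conformal factor and pullback structure.

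Next, combining the elementary scaling identity $|cT|_{cg_0} = |T|_{g_0}$ for symmetric $(0,2)$-tensors and metrics with the pointwise isometry of pullback by diffeomorphisms, I would rewrite
\[
\bigl|G_{\mathbf p}(t_0) - G_{\mathbf 0}(t_0)\bigr|_{G_{\mathbf 0}(t_0)}(x) = \Bigl|\eta_{\gamma_0}\sum_{j=1}^{K} p_j h_j\Bigr|_{\overline g}\bigl(\phi_{t_0}(x)\bigr).
\]
Using $\eta_{\gamma_0} \leq 1$, $\mathrm{supp}(\eta_{\gamma_0})\subset \{f \leq \gamma_0/(1-t_0)\}$, $|p_j| \leq \overline p (1-t_0)^{|\lambda_*|}$, and the pointwise growth estimate $|h_j|_{\overline g}(x) \lesssim_j 1 + f(x)^{|\lambda_*|}$ valid for each $j = 1,\dots,K$, one obtains on the support
\[
\Bigl|\sum_j p_j h_j\Bigr|_{\overline g} \lesssim \overline p(1-t_0)^{|\lambda_*|}\bigl(1 + (\gamma_0/(1-t_0))^{|\lambda_*|}\bigr) \lesssim_{\lambda_*} \overline p\,\gamma_0^{|\lambda_*|},
\]
where the last step absorbs the constant term once $\gamma_0/(1-t_0)$ is bounded below, which holds for $t_0$ close to $1$ (implicit in the choice $\gamma_0 = \gamma_0(t_0)$ once the support is nonempty).

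The main obstacle is the pointwise growth bound $|h_j|_{\overline g}(x) \lesssim_j 1 + f(x)^{|\lambda_*|}$. Theorem \ref{Spectrum} provides only $L^2_f$ orthonormality of the eigenmodes; upgrading to a pointwise estimate with the specific growth rate $|\lambda_*|$ requires asymptotic analysis of the weighted Lichnerowicz Laplacian on the cone end of FIK, separating tensor eigenmodes into radial and spherical components and matching against a spectral problem on the link $(S^3, g_{S^3_\mathrm{FIK}})$. The fact that $\lambda_j > \lambda_*$ for $j \leq K$ then forces growth strictly slower than $f^{|\lambda_*|}$ in each mode. The corresponding analysis is carried out in \cite{Stol1} and can be imported directly, since the spectral-theoretic setup for $\Delta_{\overline g, f} + 2\overline{Rm}$ on the FIK shrinker is identical to the one used there.
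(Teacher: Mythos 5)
Your proposal is correct and follows essentially the same route as the paper: first invoke Lemma \ref{lemmaforwelldefined} to locate $\mathrm{supp}(\phi_{t_0}^*\eta_{\gamma_0})$ inside $\{f < f_{\mathrm{Bolt}}+\Gamma_0\}$ (where Definition \ref{G_0}(i) applies), then reduce via the scaling and pullback identities to a pointwise bound on $\bigl|\eta_{\gamma_0}\sum_j p_j h_j\bigr|_{\overline g}$ controlled by polynomial eigenmode growth, which is exactly the content of [\cite{Stol1}, Proposition 4.5]. The paper compresses the second half into a single citation of that proposition; your version usefully spells out the scaling/pullback bookkeeping and the role of the support restriction $f \le \gamma_0/(1-t_0)$, while honestly flagging that the eigenmode growth estimate is imported from \cite{Stol1}.
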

	\begin{proof}
		By Lemma \ref{lemmaforwelldefined}, we can assume that 
		$\mathrm{supp} (\phi_{t_0}^* \eta_{\gamma_0}) \subset \{x \in M :  f(x) < \Gamma_0 \}$. Following the proof of \cite[Proposition 4.5]{Stol1} exactly, the desired result follows.
	\end{proof}
	
	\begin{corollary}\label{welldefined}
		Let $0<\Gamma_{0}\leq \tilde{\Gamma}_0$. If
		$0 < 1-t_0 \ll 1$ is sufficiently small and $0 < \gamma_0=\gamma_{0}(t_0) \le 1$ is sufficiently small,
		then $G_{\mathbf{p}}(t_0)$ is a well-defined metric on $M$.
	\end{corollary}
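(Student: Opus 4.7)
The plan is to deduce this immediately from Proposition \ref{G_p-G-0} via a standard perturbation argument: a symmetric 2-tensor that is pointwise close to a Riemannian metric in the metric's own norm is itself a Riemannian metric. The main point is that we have already estimated the perturbation uniformly on $M$, so only elementary linear algebra remains.

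First I would note that, by Definition \ref{G_0}, $G_{\mathbf{0}}(t_0)$ is a smooth Riemannian metric on $M$ whenever $0<1-t_0\ll 1$ is sufficiently small. Smoothness and symmetry of $G_{\mathbf{p}}(t_0)$ are then immediate from (\ref{G_p}), since the eigenmodes $h_j$ produced by Theorem \ref{Spectrum} are smooth sections of $\mathrm{Sym}^2 T^* M$, $\eta_{\gamma_0}$ is smooth and compactly supported by its defining properties, and $\phi_{t_0}$ is a diffeomorphism. Hence it only remains to verify pointwise positive-definiteness.

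For this, I would apply Proposition \ref{G_p-G-0} to obtain a constant $C = C(\lambda_*)$ such that
$$\lvert G_{\mathbf{p}}(t_0) - G_{\mathbf{0}}(t_0) \rvert_{G_{\mathbf{0}}(t_0)} \le C \overline{p}\, \gamma_0^{\lvert \lambda_* \rvert}$$
pointwise on $M$, for every $\lvert \mathbf{p}\rvert \le \overline{p}(1-t_0)^{\lvert \lambda_* \rvert}$. Shrinking $\gamma_0$ further if necessary, depending only on $\overline{p}$ and $\lambda_*$, we can ensure $C\overline{p}\gamma_0^{\lvert \lambda_*\rvert} < \tfrac{1}{2}$. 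At any point $x\in M$, denote by $A_x$ the $G_{\mathbf{0}}(t_0)$-symmetric endomorphism of $T_x M$ determined by $G_{\mathbf{p}}(t_0) - G_{\mathbf{0}}(t_0)$, i.e.\ $G_{\mathbf{0}}(t_0)(A_x\cdot,\cdot) = (G_{\mathbf{p}}(t_0)-G_{\mathbf{0}}(t_0))(\cdot,\cdot)$. Then the operator norm of $A_x$ is bounded by the above tensor norm, so every eigenvalue of the symmetric endomorphism $\mathrm{Id} + A_x$ lies in $(\tfrac12, \tfrac32)$. Equivalently, $G_{\mathbf{p}}(t_0)(v,v) \ge \tfrac12\, G_{\mathbf{0}}(t_0)(v,v) > 0$ for all nonzero $v\in T_xM$, which gives positive-definiteness.

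There is no real obstacle here; the only thing to be careful about is that the constants in the bound from Proposition \ref{G_p-G-0} depend only on $\lambda_*$ (so the required smallness of $\gamma_0$ can be chosen uniformly in $t_0$), and that all of the construction is compatible with the smoothness and support properties already recorded for $\eta_{\gamma_0}$, $h_j$ and $\phi_{t_0}$.
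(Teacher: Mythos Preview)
Your proof is correct and follows exactly the approach implicit in the paper: the corollary is stated immediately after Proposition \ref{G_p-G-0} without an explicit proof, so the intended argument is precisely the standard perturbation step you carry out, deducing positive-definiteness from the pointwise bound $\lvert G_{\mathbf{p}}(t_0)-G_{\mathbf{0}}(t_0)\rvert_{G_{\mathbf{0}}(t_0)}\lesssim \overline{p}\gamma_0^{\lvert\lambda_*\rvert}$.
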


	\subsection{The flow $G_{\mathbf{p}}(t, t_{0})$}\label{assumption}
	
	For the rest of Section \ref{setup}, $\Gamma_{0}>0$ will some fixed constant. Assume that $0 < 1-t_0  \ll 1$ is sufficiently small depending on $\Gamma_{0}$; 
	$0 < \gamma_0=\gamma_{0}(t_0,\lambda_*) \ll1$ is sufficiently small; and
	$0 < \overline{p} \le 1$ so that Corollary \ref{welldefined} implies $G_{\mathbf{p}} (t_0)$ is a smooth metric on $M$ for all $\lvert \mathbf{p} \rvert \leq \overline{p} (1-t_0)^{\lvert \lambda_* \rvert}$, and
	Lemma \ref{lemmaforwelldefined} implies $\text{supp	} \phi_{t_0}^* \eta_{\gamma_0} \subset \{f<f_{\mathrm{Bolt}}+ \frac{1}{2}\Gamma_0  \}$. Since $\frac{\Gamma_{0}}{2}<\Gamma_{0}$, this ensures $G_{\mathbf{0}}(t_0)$ satisfies property iii) in Definition \ref{G_0}.
	
		Since $G_{\mathbf{p}}(t_0)$ has bounded curvature, combining \cite[Theorem 1.1]{Chen1} and \cite[Theorem 1.1]{Shi1} to get existence and uniqueness of the Ricci flow, we can let $G_{\mathbf{p}}(t)$ denote the maximal bounded curvature solution to the Ricci flow
	on $M$ with initial data $G_{\mathbf{p}}(t_0)$ at time $t = t_0$.
	Let $T(\mathbf{p} ) = T(\mathbf{p}; \Gamma_0, \gamma_0, t_0)$ denote the minimum of $1$ and the maximal existence time of $G_{\mathbf{p}}(t)$. As we have already stated, we will show that for some $\mathbf{p}\in \mathbb{R}^K$, Ricci flow starting at $G_{\mathbf{p}}(t_0)$ gives us a Ricci flow as in Theorem \ref{finitesing}. This will be done via a Wa{\.z}ewski box argument, and will very closely follow the application of this form of argument found in \cite{Stol1}.

	\subsection{Comments}\label{comments}
	This subsection will comment on how we will use and adapt the work of Stolarski in \cite{Stol1} to prove Theorem \ref{finitesing}. As was mentioned in Subsection \ref{first}, the main difference between the work in \cite{Stol1} and this paper is that we allow for  $\Gamma_{0}$ to get arbitrarily small. The only other difference, which will be of less significance, is that Stolarski considers Ricci flow on compact manifolds. We explain how to adapt the results, and their proofs, found in \cite{Stol1} to account for these two differences.
	
	\subsubsection{Small $\Gamma_{0}$}
	
	Allowing $\Gamma_{0}$ to be small means we have to consider the behaviour of the action of the diffeomorphisms $\phi_t$ near the bolt of $\overline{g}$. As an example of how we account for this difference, we now explain the difference between how Stolarski proves \cite[Lemma 4.5]{Stol1} and how we prove Lemma \ref{lemmaforwelldefined}. Consider the inequality
	\begin{equation}\label{inq}
	\max\left\{0,\phi^*_\tau f-\frac{C}{\phi^*_\tau f}\right\}\leq \partial_\tau(\phi^*_\tau f)\leq \phi_{\tau}^*f,
	\end{equation}
	stated in the proof of Lemma \ref{lemmaforwelldefined}. Heuristically, this inequality says that for $x\in \{f>f_{\mathrm{bolt}}+\Gamma_{0}\}$ for $\Gamma_{0}>\sqrt{C}-f_{\mathrm{bolt}}$, the diffeomorphisms $\phi_{\tau}$ are controlled by the inequality
	 $$\phi^*_\tau f-\frac{C}{\phi^*_\tau f}\leq \partial_\tau(\phi^*_\tau f)\leq \phi_{\tau}^*f.$$
	  However we are interested in small $\Gamma_{0}$. To still take advantage of (\ref{inq}), we pick $\tilde{\tau}_{0}$ large enough so that 
	 $(\phi_{\tilde{\tau}_0}^*f)(x)>\sqrt{C+1}$. We can then say that for $\tau \geq \tilde{\tau}_{0}$, 
	$$\phi^*_\tau f-\frac{C}{\phi^*_\tau f} \leq \partial_\tau(\phi^*_\tau f).$$
	We can then simply follow the proof of Stolarski to get our desired result.

The only other place where the results in \cite{Stol1} need to be altered to take account of $\Gamma_{0}$ being small is in the precise location of the \textit{grafting region} $\Phi_t(\overline{\Omega}_{\eta_{\Gamma_0}})$ defined in \cite[Subsection 5.1]{Stol1}. One of the conclusions of \cite[Lemma 5.5]{Stol1} is (adapting the result in \cite{Stol1} to match our notation) the following estimate of the location of the grafting region.  
\begin{equation}\label{Stol}
\Phi_t ( \overline{ \Omega_{\eta_{\Gamma_0}}} ) \subset \left\{\frac{1}{1 - t} < f < \frac{f_{\mathrm{Bolt}}+\Gamma_0}{1 - t}  \right\},
\end{equation}
When $\Gamma_{0}$ is allowed to be small, the corresponding estimate becomes 
\begin{equation}\label{mine}
\Phi_t ( \overline{ \Omega_{\eta_{\Gamma_0}}} ) \subset \left\{ \tilde{C}\frac{1-t_0}{1 - t} < f < \frac{f_{\mathrm{Bolt}}+\Gamma_0}{1 - t}  \right\},
\end{equation}
	for a suitably small $\tilde{C}>0$. 
	
\subsubsection{Non-compact $M$}

 The only place where this difference requires a different approach is to prove inequality (\ref{mine}). 
To produce the inequality (\ref{Stol}), the analogue of (\ref{mine}) in \cite{Stol1}, Stolarski uses a curvature bound result for the Ricci flow $G_{\mathbf{p}}$ in the form of \cite[Proposition 5.2]{Stol1}. This uses a pseudolocality result for Ricci flow on a compact manifold. Appendix \ref{pseudoapp} presents the corresponding pseudolocality result in the case of Ricci flow on non-compact manifolds, and proves  the corresponding curvature bound result needed to prove inequality~(\ref{mine}).\\

	By altering the statements of the results and their proofs with these differences in mind means we can, with little effort, produce in Section \ref{final} a Ricci flow with singularity modelled on FIK as described in Theorem \ref{finitesing}. By making $\Gamma_{0}$ arbitrarily small, we will produce a Ricci flow with initial metric that is arbitrarily $L^2_{g_{\mathrm{Bolt}}}$ close to $g_{\mathrm{Bolt}}$.

\section{Proof of Theorem 1.1}\label{final}

\begin{proof}[Proof of Theorem \ref{finitesing}]
Let $\Gamma_{0} \in(0, \tilde{\Gamma}_0]$. 
By the Wa{\.z}ewski box argument deployed in the proof of [\cite{Stol1}, Theorem 1.1], there exists a $\mathbf{p}^*$ with $\lvert \mathbf{p}^* \rvert \leq \overline{p}(1-t_0)^{\lvert \lambda_{*}\rvert}$ such that the Ricci flow $G_{\mathbf{p}^*}(t, t_0, \Gamma_{0})$ encounters a local finite time singularity modelled on FIK at $t=1$.

We still have to show that $\lvert G_{\mathbf{p}^*}(t_0, \Gamma_{0})-g_{\mathrm{Bolt}} \rvert_{L^2_{g_{\mathrm{Bolt}}}}$ can be made arbitrarily small if we choose $\Gamma_{0}$ to be arbitrarily small. 
	 By Definition \ref{G_0}, the initial metric $G_{\mathbf{p}^*}(t_0)$ is only a non-trivial perturbation of $g_{\mathrm{Bolt}}$ on the set $\{f\le \tilde{C}\}$, where $\tilde{C} \rightarrow 0$ as $\Gamma_{0}\rightarrow 0$. We now show that if by choosing $\Gamma_{0}$ and $\gamma_{0}$ be to sufficiently small, we can make the quantity
$$\lvert G_{\mathbf{p}^*}(t_0, \Gamma_{0})-g_{\mathrm{Bolt}} \rvert_{L^2_{g_{\mathrm{Bolt}}}}$$
as small as we desire. By Definition \ref{G_0}, there is a constant $B>0$ independent of $\Gamma_{0}\leq \tilde{\Gamma}_0$  such that for all $0< 1-t_0 \ll 1$ sufficiently small depending on $\Gamma_{0}$ we have
$$G_{\mathbf{0}}(t_0, \Gamma_{0}) \leq Bg_{\mathrm{Bolt}},$$
on the set $\{f\le f_{\mathrm{Bolt}}+\tilde{C}\}$.

By Lemma \ref{G_p-G-0}, if $0< 1-t_0 \ll 1$ is chosen small enough and $\gamma_{0}(t_0)$ small enough, there exists $C=C(\lambda_{*})>0$ such that $$\lvert G_{\mathbf p ^*} (t_0, \Gamma_{0}) - G_{\mathbf 0} (t_0, \Gamma_{0}) \rvert_{G_{\mathbf 0} (t_0, \Gamma_{0})}\leq C \overline{p}\gamma_{0}^{\lvert \lambda_{*} \rvert} .$$ 
 Thus,  
\begin{align*}
\lvert G_{\mathbf{p}^*}(t_0, \Gamma_{0}) - g_{\mathrm{Bolt}} \rvert_{g_{\mathrm{Bolt}}} &\leq \lvert G_{\mathbf{p}^*}(t_0, \Gamma_{0})-G_{\mathbf{0}}(t_0, \Gamma_{0})\rvert_{g_{\mathrm{Bolt}}} +\lvert G_{\mathbf{0}}(t_0, \Gamma_{0}) \rvert_{g_{\mathrm{Bolt}}}+ \lvert g_{\mathrm{Bolt}} \rvert_{g_{\mathrm{Bolt}}}\\
&\leq \frac{1}{B^2}(\lvert G_{\mathbf{p}^*}(t_0, \Gamma_{0})-G_{\mathbf{0}}(t_0, \Gamma_{0})\rvert_{G_{\mathbf{0}}(t_0, \Gamma_{0})} +\lvert G_{\mathbf{0}}(t_0, \Gamma_{0}) \rvert_{G_{\mathbf{0}}(t_0, \Gamma_{0})})+ 2\\
&\leq  \frac{1}{B^2}(C\overline{p}\gamma_{0}^{\lvert \lambda_{*} \rvert}+2)+2 \coloneqq D	
\end{align*}

As a result,
$$\lvert G_{\mathbf{p}^*}(t_0, \Gamma_{0})-g_{\mathrm{Bolt}} \rvert_{L^2_{g_{\mathrm{Bolt}}}}\leq D\sqrt{\text{Vol}_{g_{\mathrm{Bolt}}}(\{f\le f_{\mathrm{Bolt}}+\tilde{C}\})}.$$ 
Letting $\Gamma_{0} \rightarrow 0$, which implies $\tilde{C}\rightarrow 0$, gives 
$$\lvert G_{\mathbf{p}^*}(t_0, \Gamma_{0})-g_{\mathrm{Bolt}} \rvert_{L^2_{g_{\mathrm{Bolt}}}} \rightarrow 0.$$
\end{proof}

\appendix
 \section{Construction of $G_{\mathbf{0}}(t_{0})$}\label{construct}
Recall that the FIK soliton is denoted by $(M,\overline{g},f)$, and that $C_{\mathrm{FIK}}$ is the cone over the sphere $\left(S^3, g_{S^3_\mathrm{FIK}} \coloneqq \frac{1}{\sqrt{2}}(\sigma_{1}^2+\sigma_{2}^2)+\frac{1}{2}\sigma_{3}^2\right)$.  For $R>0$, define $C_{R}(S^3)\coloneqq (R, \infty) \times S^3$ and equip it with the restriction of the metric $g_{C_{\mathrm{FIK}}}$ to $C_{R}(S^3)$. The construction of $G_{\mathbf{0}}$ is inspired by the construction of \cite[Appendix A]{Stol1}.\\

\begin{lemma}\label{conetoTB} Recall the diffeomorphism $\Psi$ from Theorem \ref{FIKthm}. Since $\Psi^* g_{\mathrm{Bolt}}$ is a cohomogeneity one $U(2)$-symmetric metric, it can be written in the form of equation (\ref{cohomform}):
	$$\Psi^*g_{\mathrm{Bolt}}=ds^2+ b^2(s)\left(\sigma_{1}^2+\sigma_{2}^2\right)+c^2(s)\sigma_{3}^2, \qquad s>0.$$
	There exists an $\tilde{R}>0$ such that for all $0<R\leq \tilde{R}$ there exists smooth functions $b_{R},c_{R}:~(0,\infty)\rightarrow (0,\infty)$ which satisfy
	$$b_R(s) \left\{ \begin{array}{ll}
		= \frac{s}{\sqrt[4]{2}}, 		& \text{ if } 0 < s \le R,	\\
		=  b(s), 	& \text{ if } 3R \le s,	\\
	\end{array} \right.$$
	$$c_R(s) \left\{ \begin{array}{ll}
		= \frac{s}{\sqrt{2}}, 		& \text{ if } 0 < s \le R,	\\
		= c(s), 	& \text{ if } 3R \le s,	\\
	\end{array} \right.$$
	and so that the curvature of the metric 
	$$g_R=ds^2+b_{R}^2(s)(\sigma_{1}^2+\sigma_{2}^2)+c_{R}^2(s)\sigma_{3}^2,$$ and on $\{s\geq \frac{R}{2}\}$ satisfies
	$$ \lvert \nabla_{g_R}^{m}\mathrm{Rm}[g_R]\rvert_{g_R}\leq C=C(m,R).$$
	Furthermore, $g_R\leq \Psi^* g_{\mathrm{Bolt}}$.
\end{lemma}
\begin{proof}
	Since $b(0)>0$ and $\partial_s c(0)=1>\frac{1}{\sqrt{2}}$, there exists an $\tilde{R}>0$ such that for all $0<R\leq \tilde{R}$ there exists smooth functions $b_{R},c_{R}:(0,\infty)\rightarrow (0,\infty)$ with $b_R\leq b$ and $c_R\leq c$ and satisfying the properties written in the statement of the theorem. This implies $g_R\leq \Psi^* g_{\mathrm{Bolt}}$. The curvature bounds follow as $\lvert \nabla^m_{\Psi^*g_{\mathrm{Bolt}}}\text{Rm}_{\Psi^*g_{\mathrm{Bolt}}} \rvert_{\Psi^*g_{\mathrm{Bolt}}}$ is bounded for all $m$.
	\end{proof}

\begin{lemma}\label{B}
	There exists a $B=B(R)>0$ such that $(1-t)\phi_{t}^*\overline{g} \leq Bg_{\mathrm{Bolt}}$ on $\{s\leq R\}$ for all $t\in[0,1)$. 
		\end{lemma}

\begin{proof}
The metric $\overline{g}$ evolves under the Ricci flow as
\begin{equation}\label{FIKev}
	(1-t)\phi_{t}^*\overline{g}=(1-t)[d(s\circ \phi_{t})^2+ b_{\mathrm{FIK}}(s\circ \phi_{t})^2(\sigma_{1}^2+\sigma_{2}^2)+c_{\mathrm{FIK}}(s\circ \phi_{t})^2\sigma_{3}^2].
\end{equation}
for $s>0$.
By Theorem \ref{FIKthm} and \cite[Theorem 1.1]{Cao}, there are constants $A, B, \tilde{B}, C, D, \tilde{D}, E, F>0$ such that
$$As^2\leq f(s)\leq Bs^2+\tilde{B}, Cs\leq b_{\mathrm{FIK}}(s) \leq Ds+\tilde{D}, Es \leq c_{\mathrm{FIK}}(s)\leq Fs.$$
Rewriting one of the conclusions of \cite[Lemma 2.33]{Stol1} using the variable $\tau(t)= -\text{ln}(1-t)$,  it follows that 
$$\partial_\tau(\phi^*_\tau f)\leq \phi_{\tau}^*f,$$ for all $\tau \in \mathbb{R}.$
Since $\phi_0=\text{Id}$, it follows that $f(s\circ \phi_{t}) \leq \frac{f(s)}{1-t}$, and so there exists $G>0$ such that
$$s\circ \phi_{t} \leq G\frac{s}{\sqrt{1-t}}.$$
Since $b_{\mathrm{FIK}}$ is non-decreasing, there exist $H, I, \tilde{I}>0$ such that
$$\sqrt{1-t}b_{\mathrm{FIK}}(s\circ \phi_{t}) \leq b_{\mathrm{FIK}}(H\frac{s}{\sqrt{1-t}}) \leq Is+\tilde{I}.$$
Similarly, $\sqrt{1-t}c_{\mathrm{FIK}}(s\circ \phi_{t}) \leq Ks$ for some $K>0$.
So the coefficients of (\ref{FIKev}) are bounded on $\{s\leq R\}$. Therefore $(1-t)\phi_{t}^*\overline{g}$ is dominated by $g_{\mathrm{Bolt}}$ on $\{s\leq R\}$.
\end{proof}

The next lemma constructs an interpolation between  $\overline{g}$ and the metric of Lemma \ref{conetoTB}. Since Theorem \ref{FIKthm} gives that for any $R_0>0$, $(1-t_0)\Psi^* \phi_{t_0}^* \overline{g}-g_{C_{FIK}} \rightarrow 0$ in $C_{loc}^{\infty}(\overline{C_{R_0}(S^3)}, g_{C_{FIK}})$ as $t \nearrow 1$, following the proof of \cite[Lemma A.4]{Stol1} and using that $g_R\leq \Psi^*g_{\mathrm{Bolt}}$ in Lemma \ref{conetoTB}, and Lemma \ref{B} yields the following lemma. 
\begin{lemma} \label{interp}
	Let $\tilde{R}>0$ be as in Lemma \ref{conetoTB}. 
	Consider $0<R_3\leq \tilde{R}$ and let $\Psi_t = \phi_t \circ \Psi$ be as in Theorem \ref{FIKthm}.
	Let $R_0, R_1, R_2 \in \mathbb{R}$ be such that
	$$0 < R_0 < R_1 < R_2 < R_3.$$
	Let $\eta(r) : (0, \infty) \to [0,1]$ be a smooth bump function that decreases from 1 to 0 over the interval $(R_1, R_2)$.
	For all $0 \le t_0 < 1$, consider the metric $\check{G}(t_0)$ on $C_{R_0}( S^3)$ given by 
	$$\check G(t_0)  = \eta ( 1 - t_0) \Psi^* \phi_{t_0}^* \overline{g} + ( 1 - \eta) g_{R_3}$$
	where $g_{R_3}$ is the metric from Lemma \ref{conetoTB}. Then
	$$\check G(t_0) \xrightarrow[t_0 \nearrow 1]{} g_{C_{\mathrm{FIK}}}
	\quad \text{in } C^\infty \left( C_{R_0}(S^3) \setminus  \overline{C_{R_3}( S^3 )} , g_{C_{g_{\mathrm{FIK}}}} \right)$$
	and, for all $m \in \mathbb{N}$,
	if $0 < 1 - t_0 \ll 1$ is sufficiently small depending on $R_0, R_1, R_2, R_3$, 
	then
	$\check G(t_0)$ satisfies the curvature estimate
	$$\lvert \check \nabla^m \check {\mathrm{Rm}} \rvert_{\check G(t_0) } \leq C=C(m, R_0, R_1, R_2, R_3)
	\qquad \text{on } C_{R_0}( S^3) .$$ 
	Furthermore, there is $B>0$ independent of $R_3\leq \tilde{R}$ such that $\check{G}(t_0) \leq B\Psi^*g_{\mathrm{Bolt}}$ on $C_{R_0}( S^3)$.
\end{lemma}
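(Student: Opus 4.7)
The plan is to verify the two conclusions separately by decomposing $C_{R_0}(S^3)$ into the three natural subregions determined by the bump function $\eta$, exploiting the key observation that $g_{R_3}$ is \emph{identically} equal to $g_{C_{\mathrm{FIK}}}$ wherever the interpolation with $(1-t_0)\Psi^*\phi_{t_0}^*\overline{g}$ is actually active. Specifically, by Theorem \ref{conetoTB} we have $b_{R_3}(s) = s/\sqrt[4]{2}$ and $c_{R_3}(s) = s/\sqrt{2}$ throughout $(0, R_3]$, so
$$g_{R_3} = g_{C_{\mathrm{FIK}}} \qquad \text{on } (0, R_3] \times S^3 .$$
The other ingredient is that Theorem \ref{FIKthm} provides the smooth convergence $(1-t_0)\Psi^*\phi_{t_0}^*\overline{g} \to g_{C_{\mathrm{FIK}}}$ in $C^\infty_{\mathrm{loc}}(C_{R_0}(S^3), g_C)$, with the convergence uniform on $C_{R_0}(S^3)$.

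For the first assertion (convergence on $C_{R_0}(S^3) \setminus \overline{C_{R_3}(S^3)}$), I would split the region into $(R_0, R_1] \times S^3$, $[R_1, R_2] \times S^3$, and $[R_2, R_3) \times S^3$. On the first, $\eta \equiv 1$ so $\check G(t_0) = (1-t_0)\Psi^*\phi_{t_0}^*\overline{g}$ and smooth convergence is immediate. On the third, $\eta \equiv 0$ so $\check G(t_0) = g_{R_3} = g_{C_{\mathrm{FIK}}}$ identically and the claim is trivial. On the middle (gluing) region,
$$\check G(t_0) = \eta \cdot (1-t_0)\Psi^*\phi_{t_0}^*\overline{g} + (1-\eta) g_{C_{\mathrm{FIK}}}$$
because $g_{R_3} = g_{C_{\mathrm{FIK}}}$ there; since $\eta$ is a fixed smooth function and both ingredients converge smoothly to $g_{C_{\mathrm{FIK}}}$, so does the convex combination.

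For the second assertion (uniform curvature estimates on all of $C_{R_0}(S^3)$), I would again treat the three subregions. On $[R_2, \infty) \times S^3$ we have $\check G(t_0) \equiv g_{R_3}$, so the claim follows directly from the uniform bound $|\nabla_{g_{R_3}}^m \mathrm{Rm}[g_{R_3}]|_{g_{R_3}} \le C(m, R_3)$ in Theorem \ref{conetoTB}. On the compact subregion $[R_0, R_2] \times S^3$, the smooth convergence $\check G(t_0) \to g_{C_{\mathrm{FIK}}}$ established above, together with smoothness of $\eta$ and of $g_{C_{\mathrm{FIK}}}$ away from the apex $\{s=0\}$, gives uniform $C^m$ bounds for the coefficients of $\check G(t_0)$ (and its inverse) for $0 < 1 - t_0 \ll 1$. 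These coefficient bounds translate to bounds on $|\check\nabla^m \check{\mathrm{Rm}}|_{\check G(t_0)}$ because $\check G(t_0)$ is uniformly equivalent to $g_{C_{\mathrm{FIK}}}$ on this compact subregion (the equivalence constants approach $1$ as $t_0 \nearrow 1$).

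I do not expect a serious obstacle here: all the analytic work has already been done in Theorem \ref{FIKthm} (which supplies the smooth convergence to the cone) and in Theorem \ref{conetoTB} (which supplies bounds on $g_{R_3}$). The only point that deserves care is verifying that smooth convergence on the compact region $[R_0, R_2]\times S^3$ translates into curvature bounds measured in $\check G(t_0)$ rather than in $g_C$, but this is immediate from the uniform two-sided comparison between $\check G(t_0)$ and $g_{C_{\mathrm{FIK}}}$ on that region for $t_0$ close enough to $1$.
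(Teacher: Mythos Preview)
Your proposal is correct and follows essentially the same approach as the paper: both arguments hinge on the observation that $g_{R_3} = g_{C_{\mathrm{FIK}}}$ on $(0,R_3]\times S^3$, rewrite $\check G(t_0)$ as $g_{C_{\mathrm{FIK}}}$ plus a term of the form $\eta\cdot\big((1-t_0)\Psi^*\phi_{t_0}^*\overline{g} - g_{C_{\mathrm{FIK}}}\big)$ on the relevant region, and then invoke Theorem~\ref{FIKthm} for the convergence and Theorem~\ref{conetoTB} together with the smooth convergence on the compact piece for the curvature bounds. Your three-region decomposition is just a slightly more explicit version of the paper's two-region split.
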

Let $\check G(t_0)$ and $\Psi$ be as in Lemma \ref{interp}. The metric $(\Psi^{-1} )^* \check G(t_0)$ on $\Psi( C_{R_0} ( S^3)  ) \subset M$ can be extended by $( 1 - t_0) \phi_{t_0}^* \overline{g}$ on the complement of $\Psi ( C_{R_0} ( S^3)  )$ to yield a metric on $M$.
We denote this metric by $G(t_0)$. Lemma \ref{B} yields the  dominance of $G(t_0)$ by $g_{\mathrm{Bolt}}$ on $\{f\leq f(\Psi(R_0)\}$. Since $\Psi$  maps $\{r=\mathrm{constant}\}\subset C(S^3)$ to $\{f=\mathrm{constant}\} \subset(\mathbb{CP}^2\setminus \{\mathrm{pt}\})\backslash \{f=0\}$, and preserves the order, it is clear that the following proposition holds.

\begin{proposition} \label{constructiondone}
	Let $0<\Gamma_{0}\leq f(\Psi(\tilde{R_0}))=\tilde{\Gamma}_0$, let $0 < 1 - t_0 \ll 1$ be sufficiently small depending on $\Gamma_{0}$.
	If $f(\Psi(R_0))=f_{\mathrm{Bolt}}+\frac{\Gamma_{0}}{2}$, $f(\Psi(R_1))=f_{\mathrm{Bolt}}+\Gamma_{0}$, $f(\Psi(R_2))=f_{\mathrm{Bolt}}+16\Gamma_{0}$, $f(\Psi(R_3))=f_{\mathrm{Bolt}}+32\Gamma_{0}$ then there exists $C=C(\Gamma_{0},m)$ such that $G(t_0)$ has the properties listed in Definition \ref{G_0} of $G_{\mathbf{0}}(t_0, \Gamma_{0})$.
\end{proposition}

\section{Pseudolocality results}\label{pseudoapp}
This subsection will prove the analogue of the curvature bound result \cite[Proposition 5.2]{Stol1} in the non-compact setting. Thus, we show that if $0\leq 1-t_0 \ll 1$ is sufficiently small then, for any $c>0$, the curvature $\lvert \mathrm{Rm}[G_{\mathbf{p}}(t)]\rvert_{G_{\mathbf{p}}(t)}$ is bounded on $\{f\geq f_{\mathrm{Bolt}}+\frac{1}{2}\Gamma_{0}+c\} \times [t_0, T(\mathbf{p})]$. This shows that any finite time singularity of $G_{\mathbf{p}}(t)$ would have to be local (i.e. the curvature does not blow up on the entirety of $M$). Proving the curvature bound will involve the use of a pseudolocality result of Peng \cite{Peng}. We denote by $\omega$ the volume of the sphere $S^4$ with unit radius.
\begin{theorem}[\cite{Peng}]  \label{pseudolocalitynoncompact}
	There exist $\epsilon, \delta > 0$  with the following property:
	If $(M^4, g(t) )$ is a smooth complete Ricci flow on $M$ with bounded curvature on each time slice and defined for $t \in [t_0, T)$ that, at initial time $t_0$, satisfies
	$$\lvert \mathrm{Rm}\rvert_{g(t_0)}(x) \le \frac{1}{ r_0^2} \qquad \text{ for all  } x \in B_{g(t_0)} ( x_0, r_0)$$
	and
	$$\mathrm{Vol}_{g(t_0) } B_{g(t_0)} (x_0, r_0) \ge ( 1 - \delta ) \omega r_0^4 \qquad \text{ for all } x \in B_{g(t_0)} ( x_0, r_0)$$
	for some $x_0 \in M$ and $r_0 > 0$,
	then
	$$\lvert \mathrm{Rm}\rvert_{g(t)}(x,t) \leq \frac{1}{\epsilon^2 r_0^2} \qquad \text{for all } x \in B_{g(t)} (x_0, \epsilon r_0), t \in [t_0, \min \{ T , t_0 +  (\epsilon r_0)^2 \} ).$$
\end{theorem}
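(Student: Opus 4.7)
The plan is to adapt Perelman's original pseudolocality theorem (Section 10 of his entropy paper) to the complete, non-compact, bounded-curvature setting. I would proceed by contradiction: assume no such $(\epsilon, \delta)$ exist, and extract a sequence of Ricci flows $(M_j, g_j(t))$ defined on $[t_0^j, T_j)$, basepoints $x_0^j \in M_j$, and scales $r_0^j > 0$ satisfying the two initial almost-Euclidean hypotheses with $\delta_j, \epsilon_j \to 0$, yet for which the conclusion fails at some spacetime point $(y_j, t_j)$ with $t_j \in [t_0^j, t_0^j + (\epsilon_j r_0^j)^2)$ and $|Rm|_{g_j(t_j)}(y_j) > (\epsilon_j r_0^j)^{-2}$.

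First I would parabolically rescale so that $r_0^j = 1$ and $t_0^j = 0$, and then apply Perelman's point-picking argument to replace $(y_j, t_j)$ by improved bad points with strong local curvature control in a small backward parabolic neighborhood. The key analytic input is then a $\kappa$-non-collapsedness estimate at the bad point at the scale $|Rm|^{-1/2}$. In the compact case this follows from monotonicity of the $\mathcal{W}$-entropy along a backward conjugate heat kernel based at the bad point. In the non-compact bounded-curvature setting, one must justify existence and Gaussian-type decay of the conjugate heat kernel, finiteness of its moments, and the validity of the integration-by-parts arguments needed for monotonicity; this technical package, together with suitable space-time cut-offs and exhaustions of $M$, is the substance of Peng's contribution.

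With $\kappa$-non-collapsedness and interior curvature bounds established on backward parabolic neighborhoods, Hamilton--Cheeger--Gromov compactness yields a smooth pointed limit Ricci flow $(M_\infty, g_\infty(t), y_\infty)$ defined on some parabolic neighborhood, which is non-flat since $|Rm_\infty|(y_\infty, 0) \geq 1$. On the other hand, the almost-Euclidean volume hypothesis passes to the limit (via Bishop--Gromov and the collapsing $\delta_j \to 0$) and forces $g_\infty(t_0)$ to be Euclidean on the initial ball, from which the rigidity portion of Perelman's gap-type argument propagates flatness forward in time, giving the desired contradiction.

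The main obstacle is precisely the non-compactness: Perelman's original monotonicity arguments are written on compact manifolds, and extending both the $\mathcal{W}$-entropy monotonicity and the reduced-volume comparison to a complete Ricci flow with only bounded curvature on each time slice (rather than uniform space-time curvature bounds) requires delicate cut-off and approximation steps to control boundary contributions at infinity. Once these technical foundations are in place, the contradiction argument proceeds in essentially the same way as the compact case treated by Perelman.
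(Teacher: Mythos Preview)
The paper does not prove this theorem at all: it is quoted verbatim as a result of Peng \cite{Peng} and used as a black box (to obtain the curvature bounds in Proposition \ref{pseudo5} and Lemma \ref{RFerrorterm}). So there is no ``paper's own proof'' to compare your proposal against.

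That said, your outline is a reasonable sketch of how pseudolocality is extended to the complete, bounded-curvature, non-compact setting, and it correctly identifies the main analytic issue (justifying the $\mathcal{W}$-entropy monotonicity and conjugate heat kernel estimates without compactness). One point to be careful about: in your final paragraph you say the almost-Euclidean volume hypothesis forces $g_\infty(t_0)$ to be Euclidean on the initial ball and then ``rigidity propagates flatness forward in time.'' This is not quite how Perelman's argument runs. The limit flow is typically ancient and one does not directly have access to its initial time slice; rather, the contradiction comes from comparing the $\mathcal{W}$-entropy (or reduced volume) of the conjugate heat kernel at the bad point, pushed back to the almost-Euclidean initial time, against a gap estimate coming from the non-flatness at the bad point. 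The almost-Euclidean hypothesis enters through a logarithmic Sobolev (isoperimetric) inequality at $t_0$ that bounds $\mathcal{W}$ from below there, while the bad point forces $\mathcal{W}$ to be strictly negative; monotonicity then gives the contradiction. Your sketch conflates this with a more direct ``flat initial data implies flat flow'' argument, which would require the limit to actually reach the initial time slice.
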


\begin{proposition}\label{pseudo5}
	Let $c, \Gamma_{0}>0$. There exists $\mathcal{K}_0=\mathcal{K}_0(\Gamma_{0})$ such that if $0<1-t_0 \ll 1$ is small enough depending on $\Gamma_{0}$, then for all $\lvert \mathbf{p} \rvert \leq \overline{p}e^{\lambda_{*} \tau_{0}}$,
	$$\lvert \mathrm{Rm}[G_{\mathbf{p}}(t)]\rvert_{G_{\mathbf{p}}(t)} (x,t) \leq \mathcal{K}_0,$$
	for all $(x,t)\in \{f\geq f_{\mathrm{Bolt}}+ \frac{1}{2}\Gamma_{0}+c\} \times [t_0, T(\mathbf{p})].$
\end{proposition}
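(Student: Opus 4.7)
My proof plan is to apply the non-compact pseudolocality theorem (Theorem \ref{pseudolocalitynoncompact}) pointwise on the region $\{f \ge f_{\mathrm{Bolt}} + \tfrac{1}{2}\Gamma_0 + c\}$, with a uniform choice of $r_0 > 0$, and then use that $1 - t_0 \ll 1$ to push the resulting curvature bound all the way up to $T(\mathbf{p}) \le 1$. First, I would observe that by the standing assumption from Subsubsection \ref{assumption}, $\operatorname{supp}(\phi_{t_0}^* \eta_{\gamma_0}) \subset \{f < f_{\mathrm{Bolt}} + \tfrac{1}{2}\Gamma_0\}$, so the definition \eqref{G_p} of $G_{\mathbf{p}}(t_0)$ gives $G_{\mathbf{p}}(t_0) = G_{\mathbf{0}}(t_0)$ on the larger set $\{f \ge f_{\mathrm{Bolt}} + \tfrac{1}{2}\Gamma_0\}$. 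On that set, property iv) of Definition \ref{G_0} provides $C^m$-bounds on $\operatorname{Rm}[G_{\mathbf{0}}(t_0)]$ that are uniform in $t_0$ for $1 - t_0$ small.

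Next I would use properties iii) and v) of Definition \ref{G_0}: on the intermediate region $G_{\mathbf{0}}(t_0)$ converges smoothly to $\Psi^* g_C$, and on the far region it is equal to $g_{\mathrm{Bolt}}$. Together with the uniform curvature bounds, this implies that for some fixed $r_0 = r_0(c, \Gamma_0) > 0$ and all $t_0$ sufficiently close to $1$:
\begin{enumerate}[(a)]
\item the $G_{\mathbf{0}}(t_0)$-geodesic ball $B_{G_{\mathbf{0}}(t_0)}(x_0, r_0)$ is contained in $\{f \ge f_{\mathrm{Bolt}} + \tfrac{1}{2}\Gamma_0\}$ whenever $f(x_0) \ge f_{\mathrm{Bolt}} + \tfrac{1}{2}\Gamma_0 + c$;
\item on such a ball, $|\operatorname{Rm}|_{G_{\mathbf{0}}(t_0)} \le r_0^{-2}$; and
\item $\operatorname{Vol}_{G_{\mathbf{0}}(t_0)} B_{G_{\mathbf{0}}(t_0)}(x_0, r_0) \ge (1 - \delta)\omega r_0^4$,
\end{enumerate}
with $\delta$ the constant from Theorem \ref{pseudolocalitynoncompact}. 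Claim (c) is the step requiring the most care: it uses that small geodesic balls in any smooth Riemannian manifold are almost Euclidean, applied to the $C^\infty_{\operatorname{loc}}$-limit of $G_{\mathbf{0}}(t_0)$ as $t_0 \nearrow 1$ and then transferred to $G_{\mathbf{0}}(t_0)$ via the uniform smooth convergence.

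With these three hypotheses verified at the initial time $t_0$, I would apply Theorem \ref{pseudolocalitynoncompact} to the Ricci flow $G_{\mathbf{p}}(t)$ (which is complete with bounded curvature on each time slice by Theorem \ref{Shi1}), obtaining
$$|\operatorname{Rm}[G_{\mathbf{p}}(t)]|_{G_{\mathbf{p}}(t)}(x_0, t) \le \frac{1}{\epsilon^2 r_0^2} \qquad \text{for all } t \in [t_0, \min\{T(\mathbf{p}), t_0 + (\epsilon r_0)^2\}).$$
Finally, since $T(\mathbf{p}) \le 1$ and $r_0$ is fixed, by taking $1 - t_0 < (\epsilon r_0)^2$ the minimum is simply $T(\mathbf{p})$, and the estimate holds on all of $[t_0, T(\mathbf{p}))$. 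Setting $\mathcal{K}_0 \coloneqq (\epsilon r_0)^{-2}$ and letting $x_0$ range over the region gives the result.

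The main obstacle is step (c), the almost-Euclidean volume estimate. This is handled by exploiting the smooth convergence of $G_{\mathbf{0}}(t_0)$ to its limit (cone metric in the middle, $g_{\mathrm{Bolt}}$ on the far end) together with the uniform curvature bounds, which jointly give a uniform modulus of continuity for $r \mapsto r^{-4}\operatorname{Vol} B(x, r)$ at $r = 0^+$ across the entire region; one then simply picks $r_0$ smaller than where this modulus falls below $\delta$.
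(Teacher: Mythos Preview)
Your proposal is correct and follows essentially the same approach as the paper: reduce to $G_{\mathbf{0}}(t_0)$ on $\{f \ge f_{\mathrm{Bolt}} + \tfrac12\Gamma_0\}$, use the uniform curvature bounds and smooth convergence from Definition~\ref{G_0} to verify the three hypotheses of Theorem~\ref{pseudolocalitynoncompact} with a fixed $r_0$, then take $1 - t_0 < (\epsilon r_0)^2$ so the pseudolocality bound extends to $T(\mathbf{p})$ and set $\mathcal{K}_0 = (\epsilon r_0)^{-2}$. The paper's justification of the volume estimate (c) is phrased slightly differently (it first checks it for $g_{\mathrm{Bolt}}$ and then transfers via $C^0$-closeness), but this is the same idea as your modulus-of-continuity argument.
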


\begin{proof}
	Let $\epsilon, \delta  > 0$ be as in Theorem \ref{pseudolocalitynoncompact}. Since $\text{supp	} \phi_{t_0}^* \eta_{\gamma_0} \subset \{f<f_{\mathrm{Bolt}}+ \frac{1}{2}\Gamma_0  \}$, throughout $\{ f > f_{\mathrm{Bolt}}+\Gamma_0 / 2\}$, we have
	$G_{\mathbf{p} } (t_0) = G_{\mathbf{0}} (t_0)$ for all $\mathbf{p}$. Therefore, for some $C = C(\Gamma_0)$, 
	$$\lvert \mathrm{Rm}[ G_{\mathbf{p} }(t_0) ] \rvert_{G_{\mathbf{p} }(t_0) } \le C$$ on $\{x \in M : f(x) \ge f_{\mathrm{Bolt}}+\Gamma_0 /2 \} $, 
	by Definition \ref{G_0} provided $0 < 1 - t_0 \ll 1$ is sufficiently small. Using that $G_{\mathbf{0}}(t_0)=g_{\mathrm{Bolt}}$ on $\{f>\tilde{C}\}$ for some $\tilde{C}>0$, it follows that $ G_{\mathbf{0}}(t_0)$ is asymptotic to $ds^2+4s^2(\sigma_{1}^2+\sigma_{2}^2)+\alpha \sigma_{3}^2$ for some $\alpha>0$. Hence, we can find an $0 < r_0 \ll 1$ sufficiently small such that  $\text{Vol} \,B_{ g_{\mathrm{Bolt}}} (x, r_0) \ge ( 1 - \delta) \omega r_0^4
	\text{ for all } x \in M$. 
	Furthermore, the restriction of the metrics $G_{\mathbf{p} } (t_0)$ to $\{ f > f_{\mathrm{Bolt}}+\Gamma_0 / 2\}$ stay within an arbitrarily small $C^0$-neighbourhood for all $0 < 1 - t_0 \ll 1$ sufficiently small by property ii) and iii) of Definition \ref{G_0}.
	Therefore, restricting to the set $\{f\geq f_{\mathrm{Bolt}}+ \frac{1}{2}\Gamma_{0}+c\}$, there exists $0 < r_0 \ll 1$ sufficiently small 
	depending on $\Gamma_0$
	so that, for all $0 < 1 - t_0 \ll 1$ sufficiently small,
	\begin{gather*}
		C \le \frac{1}{r_0^2} ,\\
		B_{G_{\mathbf{p} } (t_0)}(x, r_0) \subset \{ f >f_{\mathrm{Bolt}}+ \Gamma_0/2 \}
		\qquad \text{for all } x \in \{f\geq f_{\mathrm{Bolt}}+ \frac{1}{2}\Gamma_{0}+c\}, \text{ and}	\\
		\text{Vol} \, B_{G_{\mathbf{p}} (t_0)} (x, r_0) \ge ( 1 - \delta) \omega r_0^4
		\qquad \text{for all } x \in \{f\geq f_{\mathrm{Bolt}}+ \frac{1}{2}\Gamma_{0}+c\}.
	\end{gather*}
	If $0 < 1 - t_0 \ll 1$ is also small enough so that
	$$T( \mathbf{p}) - t_0 \le 1 - t_0 \le \epsilon^2 r_0^2,$$ then 
	$$\lvert \mathrm{Rm}[ G_{\mathbf{p} }(t) ] \rvert_{G_{\mathbf{p} }(t) }(x)  \le \frac{1}{ \epsilon^2 r_0^2 } \qquad \text{for all } (x,t) \in \{f\geq f_{\mathrm{Bolt}}+ \frac{1}{2}\Gamma_{0}+c\} \times [t_0, T(\mathbf{p})).$$
	Taking $\mathcal{K}_0 = ( \epsilon r_0)^{-2}$ completes the proof.
\end{proof}

\bibliography{refs}
\bibliographystyle{amsplain}

\end{document}